\newcommand{\newword}[1]{\textbf{\emph{#1}}}
\newcommand{\GL}{\mathrm{GL}}
\newcommand{\CC}{\mathbb{C}}
\newcommand{\ZZ}{\mathbb{Z}}
\newcommand{\cS}{\mathcal{S}}
\renewcommand{\sl}{\mathfrak{sl}}
\newcommand{\SL}{\mathrm{SL}}
\newcommand{\mS}{\mathfrak{S}}
\newcommand{\Id}{\mathrm{Id}}
\newcommand{\der}[1]{\tfrac{\partial}{\partial x_{#1}}}
\newcommand{\diff}{N}
\newcommand{\poly}{\ensuremath{\mathrm{Poly}}}
\newcommand{\W}{W}
\newtheorem{Theorem}{Theorem}
\newtheorem{cor}[Theorem]{Corollary}
\newtheorem{proposition}[Theorem]{Proposition}
\newtheorem{theorem}[Theorem]{Theorem}
\newtheorem{lemma}[Theorem]{Lemma}
\begin{document}

\title[Derivatives of Schubert Polynomials]{Derivatives of Schubert Polynomials and Proof of a Determinant Conjecture of Stanley}
\author{Zachary Hamaker, Oliver Pechenik, David E Speyer, Anna Weigandt}
\address{Department of Mathematics, University of Michigan, Ann Arbor, MI 48109}
\email{$\{$hamaker,pechenik,speyer,weigandt$\}$@umich.edu}
\date{\today}

\begin{abstract}
We study the action of a differential operator on Schubert polynomials. Using this action, we first give a short new proof of an identity of I.~Macdonald (1991). We then prove a determinant conjecture of R.~Stanley (2017). This conjecture implies the (strong) Sperner property for the weak order on the symmetric group, a property recently established by C.~Gaetz and Y.~Gao (2018).
\end{abstract}

\maketitle

\section{Introduction}

This paper is motivated by a conjecture of R.~Stanley~\cite[Conjecture~2.2]{Stanley:shen}. 
Let $\cS_n$ be the symmetric group with its standard generating set $S = \{s_1, s_2, \dots, s_{n-1}\}$, and let $\cS_n(\ell)$ denote the subset of those permutations of (Coxeter) length $\ell$. 
For $1 \leq \ell \leq \binom{n}{2}$, let $M_{\ell}$ be the matrix with rows indexed by $\cS_n(\ell-1)$ and columns indexed by $\cS_n(\ell)$, where the entry in position $(u,v)$ is 
\[
M_\ell [u,v] = 
\begin{cases}
k, &\text{if}\enspace v=us_k \enspace\text{and}\\
0, &\text{if $u^{-1} v \notin S$}.
\end{cases}
\]

For $\ell \leq \binom{n}{2}-\ell$, the product $\widetilde{M}^{(\ell)} = M_{\ell+1} M_{\ell+2} \cdots M_{\binom{n}{2} - \ell}$ is a square matrix with rows indexed by $\cS_n(\ell)$ and columns indexed by $\cS_n \left(\binom{n}{2} - \ell \right)$. Stanley conjectures an explicit formula for $\det \widetilde{M}^{(\ell)}$,  which implies that $\widetilde{M}^{(\ell)}$ is invertible.
A motivation for this conjecture is that $M_{\ell(u)+1} M_{\ell(u)+2} \cdots M_{\ell(v)}[u,v]$ is nonzero if and only if $u \leq v$ in weak order.
Hence, by standard linear-algebraic arguments (cf.~\cite{Stanley:Weyl, Stanley:shen}), showing that $\det \widetilde{M}^{(\ell)}$ is nonzero implies that the weak order on the symmetric group has the (strong)  Sperner property.

Recently, C.~Gaetz and Y.~Gao \cite{Gaetz.Gao} proved the invertibility of Stanley's matrix by constructing an action of the Lie algebra $\sl_2$.
We give a new proof of invertibility by proving Stanley's determinant conjecture.
Our proof also involves an $\sl_2$-representation, but relies on a new identity for derivatives of Schubert polynomials. 

The generator $s_i$ acts on $\poly_n = \CC[x_1, \ldots, x_n]$ by 
\[
s_i \cdot f(x_1, \ldots, x_i, x_{i+1}, \ldots, x_n) = f(x_1, \ldots, x_{i+1}, x_{i}, \ldots, x_n).
\]
We define the Newton divided difference operators on $\poly_n$ by
\[ \diff_i(f) = \frac{f - s_i \cdot f}{x_{i} - x_{i+1}}. \]
(We avoid the more standard notation $\partial_i$ because of potential confusion with partial derivatives.)
For $w \in \cS_n$, the \newword{Schubert polynomials} $\mS_w$ are defined by the recurrence
\[ \mS_{s_k w} = \diff_k \mS_w \ \mbox{for} \ \ell(s_k w) < \ell(w) \] with $\mS_{w_0} = x_1^{n-1} x_2^{n-2} \cdots x_{n-1}$.
For background on Schubert polynomials, we refer the reader to \cite{Macdonald:notes,Manivel}.

We consider the differential operator $\nabla = \sum_{i=1}^n \der{i}$. Our key result is the following:
\begin{proposition} \label{DerIdentity}
For $w \in \cS_n$, we have
\[ \nabla (\mS_w) = \sum_{\ell(w s_k) < \ell(w)} k \mS_{ws_k} . \]
\end{proposition}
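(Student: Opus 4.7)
The plan is to prove the identity by downward induction on $\ell(w)$ starting from $w_0$, exploiting the commutation of $\nabla$ with the divided difference operators. Since $\nabla$ is symmetric in $x_1,\ldots,x_n$ it commutes with each $s_i$, and since $\nabla(x_i - x_{i+1}) = 0$ it commutes with multiplication by $x_i - x_{i+1}$; together these give $\nabla \diff_i = \diff_i \nabla$. Writing $L(w) := \nabla \mS_w$ and $R(w) := \sum_{k : \ell(w s_k) < \ell(w)} k\,\mS_{w s_k}$, the identity to prove is $L(w) = R(w)$.

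For the inductive step, assume $L(w) = R(w)$ and let $i$ be a left descent of $w$; set $u = s_i w$, so that $\mS_u = \diff_i \mS_w$. Then $\diff_i L(w) = \nabla \mS_u = L(u)$, and $\diff_i R(w) = \sum_{k} k\, \mS_{u s_k}$ summed over those $k$ such that $k$ is a right descent of $w$ and $i$ is a left descent of $w s_k$ (other summands vanish since $\diff_i \mS_v = 0$ when $i$ is not a left descent of $v$). Thus the induction reduces to the combinatorial identity
\[
\{k : k \in D_R(w) \text{ and } i \in D_L(w s_k)\} = D_R(u),
\]
where $D_L, D_R$ denote left and right descent sets. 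I expect this set equality to be the main obstacle. It yields to a short case analysis on the pair $(w(k), w(k+1))$: the right descent sets of $w$ and $u$ agree at position $k$ except when $\{w(k), w(k+1)\} = \{i, i+1\}$, in which case the hypothesis $i \in D_L(w)$ forces $(w(k), w(k+1)) = (i+1, i)$, putting $k$ in $D_R(w) \setminus D_R(u)$; by a parallel argument, $i$ remains a left descent of $w s_k$ outside this same exceptional configuration. Both modifications exclude exactly the same $k$'s from $D_R(w)$, so the two sets coincide.

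For the base case $w = w_0$, direct differentiation of $\mS_{w_0} = x_1^{n-1} x_2^{n-2} \cdots x_{n-1}$ gives $L(w_0) = \sum_{j=1}^{n-1} (n-j)\,\mS_{w_0}/x_j$. Every $k \in \{1,\ldots,n-1\}$ is a right descent of $w_0$, and using the conjugation identity $w_0 s_k = s_{n-k} w_0$ together with the monomial calculation $\diff_j \mS_{w_0} = \mS_{w_0}/x_j$, we obtain $\mS_{w_0 s_k} = \diff_{n-k}\mS_{w_0} = \mS_{w_0}/x_{n-k}$. Substituting $j = n-k$ matches the expression for $L(w_0)$ with that for $R(w_0)$. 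Since every $w \in \cS_n$ is reachable from $w_0$ by a chain of left multiplications by simple reflections each decreasing length, the base case together with the inductive step yields the proposition.
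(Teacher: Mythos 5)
Your proof is correct, and its skeleton is the same as the paper's: show that $\nabla$ commutes with the operators $\diff_i$, verify the identity at $w_0$, and propagate down from $w_0$ using divided differences. The one genuine difference is how the propagation is organized. The paper writes $w = s_{i_1}\cdots s_{i_r}w_0$ and applies the whole composite $\diff_{i_1}\cdots\diff_{i_r}$ to $\nabla(\mS_{w_0})=\sum_k k\,\mS_{w_0 s_k}$ in one shot; the standard fact that this composite sends $\mS_{w_0 s_k}$ to $\mS_{w s_k}$ when $\ell(ws_k)=\ell(w)-1$ and to $0$ otherwise absorbs all the bookkeeping, so no descent-set analysis is needed. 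Your one-step-at-a-time induction instead hinges on the identity $\{k \in D_R(w) : i \in D_L(ws_k)\} = D_R(s_iw)$ for $i \in D_L(w)$, and your case analysis of it is right: both sides equal $D_R(w)$ with the single position $k$ satisfying $(w(k),w(k+1))=(i+1,i)$ removed, when such a position exists. So you trade the paper's appeal to the composite divided-difference fact for an explicit (correct) combinatorial lemma. A small point in your favor: deriving the commutation $\nabla \diff_i = \diff_i \nabla$ by applying the derivation $\nabla$ to $(x_i-x_{i+1})\diff_i(f) = f - s_i\cdot f$, using that $\nabla$ is symmetric and annihilates $x_i-x_{i+1}$, is slicker than the paper's term-by-term computation of the partial derivatives of the quotient.
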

Thus, Stanley's $M$-matrices are the matrices of the operator $\nabla$ in the basis of Schubert polynomials. 

We first apply Proposition~\ref{DerIdentity} to give a short new proof of a theorem of Macdonald~\cite[(6.11)]{Macdonald:notes}.
We then use Proposition~\ref{DerIdentity} to prove Stanley's conjecture. 
\begin{theorem}[{Conjectured by Stanley~\cite[Conjecture~2.2]{Stanley:shen}}] \label{MainTheorem}
For $\ell \leq \binom{n}{2}-\ell$, 
\[ \det \widetilde{M}^{(\ell)}  =  \pm \prod_{k=0}^{\ell} \left( (\ell-k+1) (\ell-k+2) \cdots ( \textstyle{\binom{n}{2}} - \ell -k)   \right)^{|\cS_n(k)| - |\cS_n(k-1)| }. \]
\end{theorem}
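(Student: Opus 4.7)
My plan is to combine Proposition~\ref{DerIdentity} with $\mathfrak{sl}_2$-representation theory and the Poincar\'{e} duality pairing on Schubert polynomials. By Proposition~\ref{DerIdentity}, each $M_j$ is the matrix of $E := \nabla \colon V_j \to V_{j-1}$ in the Schubert bases, where $V_j := \mathrm{span}\{\mS_w : \ell(w) = j\}$, so $\widetilde{M}^{(\ell)}$ represents the iterate $E^m \colon V_{\binom{n}{2}-\ell} \to V_\ell$ with $m := \binom{n}{2}-2\ell$. The task is thus to compute the determinant of $E^m$ in the Schubert bases.

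I would first extend $E$ to an $\mathfrak{sl}_2$-triple $(E, F, H)$ on $V = \bigoplus_j V_j$, with $H$ acting on $V_j$ as the scalar $\binom{n}{2}-2j$ and $F$ a raising operator, following or refining Gaetz--Gao. Constructing $F$ is delicate: the naive candidate $F\mS_w = \sum_{\ell(ws_k)>\ell(w)}(n-k)\mS_{ws_k}$ does not satisfy $[E,F]=H$, so one should think of $F$ as essentially the adjoint of $E$ with respect to a Poincar\'{e}-type pairing on $V$. The resulting isotypic decomposition $V \cong \bigoplus_k L\bigl(\binom{n}{2}-2k\bigr)^{\oplus m_k}$ with $m_k = |\cS_n(k)| - |\cS_n(k-1)|$ simultaneously recovers Gaetz--Gao's invertibility result.

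Given the $\mathfrak{sl}_2$-structure, the endomorphism $F^m E^m$ of $V_{\binom{n}{2}-\ell}$ is block-scalar on isotypic summands: via the standard identity $e^m f^j v_d = \frac{j!(d-j+m)!}{(j-m)!(d-j)!} f^{j-m} v_d$, it acts by $c_k^2$ on the $L\bigl(\binom{n}{2}-2k\bigr)$-summand, where $c_k = (\ell-k+1)(\ell-k+2)\cdots(\binom{n}{2}-\ell-k)$. Since $\det(F^m E^m)$ is basis-invariant, in the Schubert bases one obtains
\[
\det F^m \cdot \det \widetilde{M}^{(\ell)} = \prod_{k=0}^{\ell} c_k^{2 m_k}.
\]

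Finally, I would show $\det F^m = \pm \det \widetilde{M}^{(\ell)}$ using the Poincar\'{e} duality pairing on Schubert polynomials. Since $F$ is essentially $E^*$ with respect to this pairing, and the pairing realizes the Schubert basis as self-dual up to a signed permutation (whose determinant is $\pm 1$), the matrices of $F^m$ and $E^m$ in the Schubert bases have determinants differing by at most a sign. Combining yields $(\det \widetilde{M}^{(\ell)})^2 = \pm \prod_k c_k^{2 m_k}$; since the square is nonnegative and $\widetilde{M}^{(\ell)}$ has integer entries, the sign must be $+$, giving $\det \widetilde{M}^{(\ell)} = \pm \prod_k c_k^{m_k}$, as claimed. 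The main obstacles are the explicit construction of $F$ and the verification of the adjoint relation $F = E^*$; both require a careful understanding of how $\nabla$ interacts with the Schubert basis beyond Proposition~\ref{DerIdentity}.
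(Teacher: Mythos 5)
Your overall skeleton is sound in places: the reduction via Proposition~\ref{DerIdentity}, the multiplicities $m_k = |\cS_n(k)|-|\cS_n(k-1)|$ obtained from weight-space dimensions, and the computation that $F^m E^m$ acts by the scalar $c_k^2$ on the relevant weight space of each irreducible summand are all correct, \emph{granted} an $\sl_2$-triple $(E,F,H)$ with $E=\nabla$ on the span of the Schubert polynomials (which you could cite from Gaetz--Gao). The genuine gap is the step $\det F^m = \pm\det\widetilde{M}^{(\ell)}$, which carries all the remaining content and which you do not prove. You justify it by saying that $F$ is ``essentially the adjoint of $E$'' for a Poincar\'e-type pairing under which the Schubert basis is self-dual up to a signed permutation. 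Neither half is established, and the first half cannot hold verbatim: the Poincar\'e pairing pairs $\W_d$ with $\W_{\binom{n}{2}-d}$, so the adjoint of the degree-lowering operator $E$ with respect to it is again a degree-lowering operator (the transpose of $E:\W_d\to\W_{d-1}$ becomes a map $\W_{\binom{n}{2}-d+1}\to\W_{\binom{n}{2}-d}$), not a raising operator. To have $F=E^{*}$ you need an equal-degree (contravariant, Shapovalov-type) form, and for such a form there is no reason the Schubert basis should be self-dual up to a signed permutation; knowing its Gram matrix in the Schubert basis is essentially equivalent to knowing the Schubert-basis matrix of $F$, i.e., exactly the input you have not supplied. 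Moreover, since $F$ is uniquely determined by $E$ and $H$, your assertion is a concrete unproved identity about the Gaetz--Gao raising operator, whose explicit coefficients are not the naive ones.

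The paper closes this gap by a different mechanism that avoids the raising operator in the Schubert basis entirely: by the Bergeron--Billey leading-term result (Lemma~\ref{LeadingTerm}), the transition matrix between the Schubert basis of $\W$ and the monomial basis has determinant $\pm1$ (Corollary~\ref{ChangeBasis}), so the determinant of $\nabla^{\binom{n}{2}-2\ell}$ may be computed in the monomial basis. There $\W$ is literally $V_{n-1}\otimes V_{n-2}\otimes\cdots\otimes V_0$, with $\nabla$ acting as the standard lowering operator, and the exponent-complementation map is implemented by the group element $J\in\SL_2$, whose monomial-basis matrix is a signed permutation; the determinant of $\nabla^{\binom{n}{2}-2\ell}\circ J$ on $\W_\ell$ is then a one-line weight computation, with no pairing, no explicit raising operator, and no appeal to Gaetz--Gao. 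If you wish to keep your Schubert-basis route, you must actually construct $F$ in that basis and prove the adjointness/self-duality claim (or otherwise evaluate $\det F^m$ there); as it stands, that step is comparable in difficulty to the theorem itself.
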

We write $\pm$ because we have not specified an order on the rows and columns of each $M_i$.

\section{Proof of Proposition~\ref{DerIdentity} and a Macdonald identity}

We start with a straightforward lemma.
\begin{lemma}\label{lem:commutation}
$\nabla$ commutes with $\diff_i$ for all $i$. That is, for any $f \in \poly_n$, we have
\[
\nabla(\diff_i(f)) = \diff_i(\nabla(f)).
\]
\end{lemma}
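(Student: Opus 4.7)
The plan is to exploit two basic properties of $\nabla$: it is a symmetric, first-order, constant-coefficient differential operator. Symmetry under coordinate permutations gives $s_i \cdot \nabla(f) = \nabla(s_i \cdot f)$, and the Leibniz rule applied to the linear polynomial $x_i - x_{i+1}$ gives $\nabla(x_i - x_{i+1}) = 1 - 1 = 0$.

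With these two observations in hand, the argument is essentially a short calculation. I would start from the identity
\[
(x_i - x_{i+1}) \cdot \diff_i(f) = f - s_i \cdot f,
\]
which is just the definition of $\diff_i$ rewritten (and which in particular confirms that $\diff_i(f)$ is a polynomial, so $\nabla$ makes sense applied to it). Apply $\nabla$ to both sides. On the left, the Leibniz rule together with $\nabla(x_i - x_{i+1}) = 0$ leaves only $(x_i - x_{i+1}) \cdot \nabla(\diff_i(f))$. On the right, the commutation of $\nabla$ with $s_i$ yields $\nabla(f) - s_i \cdot \nabla(f)$. Dividing by $x_i - x_{i+1}$ gives $\nabla(\diff_i(f)) = \diff_i(\nabla(f))$, as desired.

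There is not really a main obstacle here; the statement is a routine compatibility, and the only thing to check carefully is that the two ingredients (symmetry of $\nabla$ and the vanishing $\nabla(x_i - x_{i+1}) = 0$) are genuinely what make the divided-difference construction transparent to $\nabla$. In the write-up I would state these two facts explicitly before doing the one-line manipulation, so that the role of each is clear when the lemma is invoked in the proof of Proposition~\ref{DerIdentity}.
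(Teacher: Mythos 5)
Your proof is correct and rests on the same two ingredients as the paper's: the symmetry $\nabla(s_i\cdot f)=s_i\cdot\nabla(f)$ and the vanishing of $\nabla$ on $x_i-x_{i+1}$ (in the paper this appears as the cancellation of the two $\pm\frac{f-s_i\cdot f}{(x_i-x_{i+1})^2}$ terms coming from the quotient rule). The only difference is presentational: the paper differentiates the quotient $\diff_i(f)$ term by term, while you clear the denominator and apply the Leibniz rule to $(x_i-x_{i+1})\,\diff_i(f)=f-s_i\cdot f$, which makes the same cancellation automatic.
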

\begin{proof}
If $j \notin \{ i, i+1 \}$, then \[
\der{j} \diff_i(f) = 
\frac{1}{x_i - x_{i+1}} \left( \der{j} (f) - \der{j} (s_i \cdot f) \right).
\]

If $j = i$, then 
\[
\der{i} \diff_i(f) = \frac{1}{x_i - x_{i+1}} \left( \der{i} (f) - \der{i} (s_i \cdot f) \right) - \frac{f - s_i \cdot f}{(x_i - x_{i+1})^2}.
\]

Similarly, if $j = i+1$, then
\[
\der{i+1} \diff_i(f) 
= \frac{1}{x_i - x_{i+1}} \left( \der{i+1} (f) - \der{i+1} (s_i \cdot f) \right) + \frac{f - s_i \cdot f}{(x_i - x_{i+1})^2}.
\]
Therefore, 
\begin{align*}
\nabla(\diff_i(f)) &= \frac{f - s_i \cdot f}{(x_i - x_{i+1})^2} - \frac{f - s_i \cdot f}{(x_i - x_{i+1})^2} +   \sum_{j=1}^n \frac{1}{x_i - x_{i+1}} \left( \der{j} (f) - \der{j} (s_i \cdot f) \right) \\
&= \frac{\sum_{j=1}^n \der{j}(f) - \sum_{j=1}^n \der{j}(s_i \cdot f)}{x_i - x_{i+1}}  = \frac{\nabla(f) - \nabla(s_i \cdot f)}{x_i - x_{i+1}} \\
&= \frac{\nabla(f) - s_i \cdot \nabla(f)}{x_i - x_{i+1}} = \diff_i (\nabla (f)),
\end{align*}
as desired.
\end{proof}

\begin{proof}[Proof of Proposition~\ref{DerIdentity}]
We first verify Proposition~\ref{DerIdentity} in the case that $w = w_0$. Since $\mS_{w_0} = x_1^{n-1} x_2^{n-2} \cdots x_{n-1}$,  we have
\begin{align*}
\nabla(\mS_{w_0}) &= \sum_{j=1}^{n-1} (n-j) x_1^{n-1} x_2^{n-2} \cdots x_{j-1}^{n-j+1} x_j^{n-j-1} x_{j+1}^{n-j-1} \cdots x_{n-1} \\
&= \sum_{k=1}^{n-1} k x_1^{n-1} x_2^{n-2} \cdots x_{n-k-1}^{k+1} x_{n-k}^{k-1} x_{n-k+1}^{k-1} \cdots x_{n-1} . 
\end{align*}
 But also
\[ \mS_{w_0 s_{k}} = \mS_{s_{n-k} w_0} = \diff_{n-k}(\mS_{w_0}) =  x_1^{n-1} x_2^{n-2} \cdots x_{n-k-1}^{k+1} x_{n-k}^{k-1} x_{n-k+1}^{k-1} \cdots x_{n-1} . \]
Comparing these equations gives
\[ \nabla(\mS_{w_0}) = \sum_{k=1}^{n-1} k \mS_{w_0 s_k} . \]

Consider an arbitrary permutation $w$. Let $r = \binom{n}{2} - \ell(w)$ and write $w = s_{i_1} s_{i_2} \cdots s_{i_r} w_0$.
By Lemma~\ref{lem:commutation}, we have
\begin{equation}\label{eq:nabla}\tag{$\star$}
 \nabla(\mS_{w}) = \nabla \diff_{i_1} \diff_{i_2} \cdots \diff_{i_r}(\mS_{w_0}) = \diff_{i_1} \diff_{i_2} \cdots \diff_{i_r} \nabla(\mS_{w_0}) =  \diff_{i_1} \diff_{i_2} \cdots \diff_{i_r}  \sum_{k=1}^{n-1} k \mS_{w_0 s_k} . 
 \end{equation}
Hence, \[
\diff_{i_1} \diff_{i_2} \cdots \diff_{i_r} ( \mS_{w_0 s_k} )= 
\begin{cases}
\mS_{s_{i_1} \cdots s_{i_r} w_0 s_k}, &\text{if $\ell(w s_k) = \binom{n}{2} - r -1 = \ell(w)-1$} \\
0, & \text{otherwise}.
\end{cases}
\] 
Since $\mS_{s_{i_1} \cdots s_{i_r} w_0 s_k} = \mS_{w s_k}$, Equation~\eqref{eq:nabla} then becomes
\[ \nabla (\mS_{w}) = \sum_{\ell(w s_k) = \ell(w)-1} k \mS_{w s_k}, \]
as desired.
\end{proof}

Proposition~\ref{DerIdentity} yields a short proof of an identity of Macdonald~\cite[(6.11)]{Macdonald:notes}. Another proof of this result was given by S.~Fomin and R.~Stanley in terms of nilCoxeter algebras \cite{Fomin.Stanley}, while a bijective proof was given by S.~Billey, A.~Holroyd and B.~Young \cite{Billey.Holroyd.Young}. A \newword{reduced word} for $w \in \cS_n$ is a tuple $a = (a_1, \dots, a_{\ell(w)})$ such that $w=s_{a_1} s_{a_2} \cdots s_{a_{\ell(w)}}$. We write $R(w)$ for the set of reduced words of $w$.

\begin{theorem}[{Macdonald~\cite[(6.11)]{Macdonald:notes}}]
Let $w \in \cS_n$ with $\ell(w) = k$. Then
\[ \frac{1}{k! } \sum_{a \in R(w)} a_1 a_2 \cdots a_k = \mS_w(1,1,\ldots,1). \]
\end{theorem}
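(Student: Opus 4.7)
The plan is to extract Macdonald's identity by evaluating $\nabla^k \mS_w$ in two different ways: once via iteration of Proposition~\ref{DerIdentity}, which will give the reduced-word sum, and once via a direct calculation, which will give $k! \cdot \mS_w(1,1,\ldots,1)$.

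First I would iterate Proposition~\ref{DerIdentity}. Applying $\nabla$ repeatedly, we get
\[
\nabla^j(\mS_w) = \sum k_1 k_2 \cdots k_j \, \mS_{w s_{k_1} s_{k_2} \cdots s_{k_j}},
\]
where the sum runs over tuples $(k_1,\ldots,k_j)$ such that $\ell(w s_{k_1} s_{k_2} \cdots s_{k_i}) = \ell(w) - i$ for every $1 \leq i \leq j$. In particular, when $j = k = \ell(w)$, the indexing permutation is forced to be the identity, whose Schubert polynomial is $1$. A tuple $(k_1,\ldots,k_k)$ surviving in the sum is exactly a saturated chain from $w$ down to $e$ in right weak order, i.e., $w = s_{k_k} s_{k_{k-1}} \cdots s_{k_1}$ is a reduced expression. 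Setting $a_i = k_{k-i+1}$ identifies these chains with reduced words $a \in R(w)$, and since $k_1 \cdots k_k = a_1 \cdots a_k$, we obtain
\[
\nabla^k(\mS_w) = \sum_{a \in R(w)} a_1 a_2 \cdots a_k.
\]

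Next I would prove the general fact: for any homogeneous polynomial $f \in \poly_n$ of degree $k$,
\[
\nabla^k f = k! \cdot f(1,1,\ldots,1).
\]
By linearity it suffices to check this on a monomial $f = x_1^{a_1} \cdots x_n^{a_n}$ with $\sum a_j = k$. Expanding $\nabla^k = (\partial_1 + \cdots + \partial_n)^k$ by the multinomial theorem, one sees that only the term $\binom{k}{a_1,\ldots,a_n} \partial_1^{a_1} \cdots \partial_n^{a_n}$ contributes (any other multi-index produces a surviving variable after differentiation, which would then differentiate to $0$ from excess applications). This term yields $\frac{k!}{a_1! \cdots a_n!} \cdot a_1! \cdots a_n! = k!$, matching $k! \cdot f(1,\ldots,1) = k!$.

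Since $\mS_w$ is homogeneous of degree $k = \ell(w)$, combining these two evaluations gives
\[
\sum_{a \in R(w)} a_1 a_2 \cdots a_k = \nabla^k(\mS_w) = k! \cdot \mS_w(1,1,\ldots,1),
\]
and dividing by $k!$ yields the claimed identity. There is no real obstacle: the only subtlety worth stating carefully is the bijection between surviving chains in the iterated Proposition~\ref{DerIdentity} and reduced words (reading indices in reverse), and verifying that the product of labels is preserved under this reversal.
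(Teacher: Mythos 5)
Your proposal is correct and follows essentially the same route as the paper: evaluate $\nabla^k(\mS_w)$ once by iterating Proposition~\ref{DerIdentity} (giving the reduced-word sum) and once using that $\nabla^k$ of a degree-$k$ homogeneous polynomial is $k!$ times its value at $(1,\ldots,1)$. You merely spell out two details the paper leaves implicit --- the multinomial computation on monomials and the index-reversal bijection with $R(w)$ --- both of which you handle correctly.
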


\begin{proof}
For any monomial $\mu$ of degree $k$, we have $\nabla^k (\mu) = k!$.
Since $\mS_w$ is homogenous of degree $k$, we then see $\nabla^k( \mS_w) = k! \mS_w(1,1,\ldots,1)$.
On the other hand, by Proposition~\ref{DerIdentity}, 
\[ \pushQED{\qed} \nabla^k (\mS_w) =  \sum_{a \in R(w)} a_1 a_2 \cdots a_k. \qedhere \popQED \] \let\qed\relax
\end{proof}

\section{A vector space of polynomials}\label{sec:vectorspace}

Let $\W \subset \poly_n$ be the span of the monomials of the form $x_1^{a_1} x_2^{a_2} \cdots x_n^{a_n}$ with $0 \leq a_j \leq n-j$. 
(In particular, the exponent of $x_n$ is required to be $0$, so the variable $x_n$ does not occur in any polynomial in $\W$.)
Let $\W_{\ell}$ be the subspace of $\W$ spanned by monomials of degree $\ell$. We will need the following lemma.

\begin{lemma}[{\cite[Proof of Corollary~3.9]{Bergeron.Billey}}] \label{LeadingTerm}
For $w \in \cS_n$, the Schubert polynomial $\mS_w$ lies in $\W$. If we choose a term order with $x_n > x_{n-1} > \cdots > x_1$, then the leading term of $\mS_w$ is
\[ \pushQED{\qed} \prod_{j=1}^n x_j^{\# \{ k \ : \ k>j,\ w(k) < w(j) \} }.  \qedhere \popQED \] 
\end{lemma}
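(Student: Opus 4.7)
The plan is to prove both assertions simultaneously by induction on $\binom{n}{2} - \ell(w)$, descending from $w_0$. Throughout, I write $c_j(w) = \#\{k > j : w(k) < w(j)\}$ for the Lehmer code of $w$, so that the target leading monomial is $x^{c(w)} := \prod_j x_j^{c_j(w)}$. The base case $w = w_0$ is immediate: $\mS_{w_0} = \prod_{j=1}^{n-1} x_j^{n-j}$ is a single monomial in $\W$ whose exponent vector is exactly $c(w_0) = (n-1, n-2, \ldots, 1, 0)$. For the inductive step, given $w \neq w_0$, pick an ascent $i$ (so $w(i) < w(i+1)$) and let $v \in \cS_n$ be the permutation obtained by swapping $w(i)$ and $w(i+1)$; then $\ell(v) = \ell(w) + 1$ and, by the Schubert recurrence, $\mS_w = \diff_i \mS_v$, so I may assume by induction that $\mS_v \in \W$ and $\mS_v = x^{c(v)} + (\text{strictly smaller terms})$.

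Membership $\mS_w \in \W$ is the easier half: a direct term-by-term verification shows $\diff_i$ preserves $\W$, since applied to $x_1^{a_1} \cdots x_n^{a_n}$ with each $a_j \leq n-j$ it only modifies the factor $x_i^{a_i} x_{i+1}^{a_{i+1}}$, producing monomials whose exponents at positions $i, i+1$ are bounded above by $\max(a_i, a_{i+1}) - 1$, and other exponents are unchanged. For the leading-term assertion, since $s_i$ is a descent of $v$ one has $c_i(v) > c_{i+1}(v)$, and the expansion $\diff_i(x_i^A x_{i+1}^B) = \sum_{k=0}^{A-B-1} x_i^{A-1-k} x_{i+1}^{B+k}$ (with $A = c_i(v)$ and $B = c_{i+1}(v)$) yields, as its lex-largest monomial under $x_n \succ \cdots \succ x_1$, the term $x_i^{c_{i+1}(v)} x_{i+1}^{c_i(v) - 1} \prod_{j \neq i, i+1} x_j^{c_j(v)}$. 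A short check from the definitions gives $c_i(w) = c_{i+1}(v)$ and $c_{i+1}(w) = c_i(v) - 1$ (other $c_j(w)$ equal $c_j(v)$), so this lex-largest monomial is precisely $x^{c(w)}$.

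The main obstacle is ensuring that no non-leading monomial $x^b$ of $\mS_v$ contributes a term $\succeq x^{c(w)}$ to $\diff_i \mS_v$, either producing a spuriously larger leading term or cancelling the desired one. When $b$ and $c(v)$ first disagree at a position $j > i+1$, the argument is immediate: $\diff_i$ preserves those exponents, so the resulting monomials are $\prec x^{c(w)}$. The delicate case is when $b_j = c_j(v)$ for all $j > i+1$ with $b_i$ strictly exceeding $c_i(v)$; here one must rule out that $\diff_i(x^b)$ reaches $x^{c(w)}$. The cleanest route, and the one taken in the proof of Bergeron--Billey's Corollary~3.9, is to sidestep this analysis entirely by invoking the pipe-dream (RC-graph) expansion $\mS_v = \sum_D x^D$ over reduced pipe dreams of $v$: the Rothe diagram of $v$ furnishes the unique pipe dream whose monomial is $x^{c(v)}$, and every other reduced pipe dream yields a strictly lex-smaller monomial in the chosen term order. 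This gives the leading-term statement directly and circumvents the cancellation issue.
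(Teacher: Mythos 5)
Note first that the paper does not give an argument for this lemma at all: the bracketed citation and the terminal box indicate that the proof is the one in Bergeron--Billey (via the pipe-dream/RC-graph expansion of $\mS_w$). Measured against that, the verdict on your proposal is mixed. The parts you actually prove are correct: the base case, the verification that $\diff_i$ preserves $\W$ (the new exponents in positions $i,i+1$ are at most $\max(a_i,a_{i+1})-1\le n-i-1$), the computation of the lex-largest term of $\diff_i$ applied to $x^{c(v)}$, and the code recursion $c_i(w)=c_{i+1}(v)$, $c_{i+1}(w)=c_i(v)-1$. But you abandon the induction at its only genuinely nontrivial point --- ruling out that a non-leading monomial $x^b$ of $\mS_v$ agreeing with $c(v)$ in positions $>i+1$ but with $b_i\ge c_i(v)$ contributes a term $\succeq x^{c(w)}$, possibly cancelling the desired leading term --- and you close that gap only by invoking the statement that the bottom pipe dream of $v$ gives the unique lex-maximal monomial among all reduced pipe dreams. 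That invoked statement, applied directly to $w$, already yields the entire lemma: positivity of the pipe-dream expansion gives the leading term with no cancellation worries, and the staircase shape of pipe dreams gives $\mS_w\in\W$ since row $j$ contains at most $n-j$ crosses. So once you appeal to it, your whole inductive scaffolding is superfluous, and your proof reduces to exactly the citation the paper itself makes; as an independent argument, the ``delicate case'' is never handled by reasoning of your own.

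Two smaller corrections. First, the Rothe diagram of $v$ is in general \emph{not} a pipe dream; the relevant object is the bottom pipe dream, obtained by left-justifying the rows of the diagram, which has the same row counts and hence the same monomial $x^{c(v)}$. Second, if you want the argument to be self-contained, the missing ingredient is precisely Bergeron--Billey's connectivity result: every reduced pipe dream of $w$ is obtained from the bottom one by a sequence of ladder moves, and each ladder move transfers a cross from some row $i$ to a row of strictly smaller index, hence strictly decreases the monomial in any term order with $x_n>x_{n-1}>\cdots>x_1$; this simultaneously gives lex-maximality and uniqueness of the monomial $x^{c(w)}$. Either supply that (or some other resolution of the delicate case in your induction), or simply state the lemma with the citation, as the paper does.
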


The list of numbers $\# \{ k \ : \ k>j,\ w(k) < w(j) \}$ is the \newword{(Lehmer) code} of $w$; taking the code is a bijection between $\cS_n$ and $\{ (a_1, \ldots, a_n) \in \ZZ^n : 0 \leq a_j \leq n-j \}$ (see, e.g., \cite[Proposition~2.1.2]{Manivel}). Thus, Lemma~\ref{LeadingTerm} implies that the Schubert polynomials have distinct leading terms and we deduce:
\begin{cor} \label{ChangeBasis}
The Schubert polynomials $\mS_w$ for $w \in \cS_n$ are a basis for $\W$. The change of basis matrix between $\{ \mS_w : \ell(w) = k \}$ and $\{ x_1^{a_1} x_2^{a_2} \cdots x_n^{a_n} \ : \ 0 \leq a_j \leq n-j,\ \sum a_j =k \}$ has determinant $\pm 1$. \qed
\end{cor}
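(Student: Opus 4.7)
The plan is a standard leading-term / triangularity argument built directly on Lemma~\ref{LeadingTerm} and the Lehmer code bijection.

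First, I would observe that since $\mS_w$ is homogeneous of degree $\ell(w)$, Lemma~\ref{LeadingTerm} places $\mS_w$ inside $\W_{\ell(w)}$, and identifies its leading monomial (with respect to the term order $x_n > x_{n-1} > \cdots > x_1$) as $x_1^{c_1} x_2^{c_2} \cdots x_n^{c_n}$, where $c(w) = (c_1, \dots, c_n)$ is the Lehmer code of $w$. The leading coefficient is $1$.

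Next, I would use the Lehmer code bijection $\cS_n \to \{(a_1, \dots, a_n) \in \ZZ^n : 0 \leq a_j \leq n-j\}$. Since $|c(w)| = \ell(w)$, this bijection restricts in each degree to a bijection between $\cS_n(k)$ and the monomial basis $\{x_1^{a_1} \cdots x_n^{a_n} : 0 \le a_j \le n - j,\ \sum a_j = k\}$ of $\W_k$. In particular, the Schubert polynomials $\{\mS_w : \ell(w) = k\}$ have pairwise distinct leading monomials, and these leading monomials are precisely the natural monomial basis of $\W_k$.

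Finally, I would order both indexing sets compatibly with the chosen term order, so that the change-of-basis matrix in degree $k$ becomes upper unitriangular (leading $1$'s on the diagonal by the preceding paragraph, zeros above by the distinct-leading-monomials argument). This simultaneously establishes linear independence, hence the basis property (both in each $\W_k$ and, summing over $k$, in $\W$ itself), and shows that the change-of-basis determinant equals $1$ for this ordering; since the statement does not fix an ordering of rows and columns, we conclude the determinant is $\pm 1$ in general. There is no real obstacle here: once Lemma~\ref{LeadingTerm} and the Lehmer code bijection are in hand, the corollary reduces to the observation that a unitriangular matrix has determinant one.
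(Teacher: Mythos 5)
Your proposal is correct and follows essentially the same route as the paper: Lemma~\ref{LeadingTerm} plus the Lehmer code bijection give distinct leading monomials matching the monomial basis of each $\W_k$, so the change-of-basis matrix is (uni)triangular up to ordering and the determinant is $\pm 1$. Nothing is missing.
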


Proposition~\ref{DerIdentity} shows that $\nabla : \W_{\ell} \to \W_{\ell-1}$, in the Schubert basis, is represented by the matrix $M_{\ell}$. Therefore, to prove Theorem~\ref{MainTheorem}, we must compute 
\[ \det\left( \W_{\binom{n}{2} - \ell} \xrightarrow{\ \nabla^{\binom{n}{2} - 2 \ell}\ } \W_{\ell} \right) \]
in the Schubert basis. By Corollary~\ref{ChangeBasis}, we may compute this determinant instead in the monomial basis. 
For the remainder of this note, Schubert polynomials disappear and our goal is to compute the determinant of $\nabla^j$ acting with respect to the monomial basis.

We would prefer to have a map from a vector space to itself, so that we could speak of its determinant without any reference to bases. 
There is a simple bijection between the monomial bases of $\W_k$ and $\W_{\binom{n}{2} - k}$, taking $\prod x_j^{a_j}$ to $\prod x_j^{n-j-a_j}$. 
For reasons that will become clear in Section~\ref{sec:sl2_reps}, we prefer to twist this map by $(-1)^k$, so we define $J$ to be the linear endomorphism of $\W$ with
\[ J\left( \prod x_j^{a_j} \right) = (-1)^{\sum a_j} \prod x_j^{n-j-a_j} . \]
Note that $J$ has determinant $\pm 1$ in the monomial basis and that $\nabla^{\binom{n}{2} - 2\ell} \circ J$ maps $\W_{\ell}$ to itself. Thus, to finish our proof of Theorem~\ref{MainTheorem}, it remains to establish
\[ \det \left( \W_{\ell} \xrightarrow{\ \nabla^{\binom{n}{2} - 2\ell} \circ J \ } \W_{\ell} \right) =   \pm \prod_{k=0}^{\ell} \left( (\ell-k+1) (\ell-k+2) \cdots ( \textstyle{\binom{n}{2}} - \ell -k)   \right)^{|\cS_n(k)| - |\cS_n(k-1)| },\]
a statement that makes no reference to bases. We now turn to this task.

\section{$SL_2$-representations and a proof of Theorem~\ref{MainTheorem}}\label{sec:sl2_reps}

In this section, we discuss some representations of the Lie group $\SL_2$ and its Lie algebra $\sl_2$. We will denote group actions by variants of the letter $\rho$ and the 
corresponding Lie algebra actions by variants of $\sigma$. We write the standard basis of $\sl_2$ as
\[ F = \begin{bmatrix} 0&0\\1&0 \end{bmatrix} \qquad H = \begin{bmatrix} 1&0\\0&-1 \end{bmatrix} \qquad E = \begin{bmatrix} 0&1\\0&0 \end{bmatrix} \]
and define the element
\[ J = \begin{bmatrix} 0&1 \\ -1 & 0 \end{bmatrix} \in \SL_2.\]

Let $V_k$ be the $(k+1)$-dimensional irreducible representation of $\SL_2$ and $\sl_2$; we write $\rho_k$ and $\sigma_k$ for the action maps $\rho_k : \SL_2 \to \GL(V_k)$ and $\sigma_k : \sl_2 \to \mathrm{End}(V_k)$. 

One usually describes $V_k$ as the natural action on degree $k$ polynomials in two variables.
For our purposes, it is more convenient to describe $V_k$ as an action on polynomials  of degree $\leq k$ in one variable $x$.
We have
\begin{equation}\tag{$\dagger$}\label{eq:rep}
\begin{array}{rcr@{}l}
\sigma_k(F)(x^j) &=& j &x^{j-1} \\  \sigma_k(H)(x^j) &=& (2j-k) &x^j \\  \sigma_k(E) (x^j) &=& (k-j) &x^{j+1} \\  \rho_k(J)(x^j) &=& (-1)^j &x^{k-j} . \\
\end{array}  
\end{equation}
In particular, for any polynomial $f$, we have $\sigma_k(F)(f) = \tfrac{d f}{d x}$. 

Identify the vector space $\W$ from Section~\ref{sec:vectorspace} with $V_{n-1} \otimes V_{n-2} \otimes \cdots  \otimes V_0$ by identifying $x_1^{a_1} x_2^{a_2} \cdots x_{n}^{a_{n}}$ with $x^{a_1} \otimes x^{a_2} \otimes \cdots \otimes x^{a_{n}}$. 
We let $\SL_2$ and $\sl_2$ act on this tensor product in the standard way, and denote these actions by $\sigma_\W$ and $\rho_\W$. 
We note that $\W_{\ell}$ is the $2 \ell-\binom{n}{2}$ weight space, i.e.,\ the $2 \ell-\binom{n}{2}$ eigenspace of $\sigma_\W(H)$.

We have
\[ \sigma_\W(F) = \sum_{k=1}^n \Id \otimes \Id \otimes \cdots \otimes \sigma_{n-k}(F) \otimes \cdots \otimes \Id \]
where $\sigma_{n-k}(F)$ occurs in the $k$-th position. Therefore,
\[ \sigma_\W(F) \cdot f = \sum_{k=1}^n \der{k} f  = \nabla f  .\]
Similarly, 
\[ \rho_\W(J) = \rho_{n-1} \otimes \rho_{n-2} \otimes \cdots \otimes \rho_0, \]
so
\[ 
\rho_\W(J) \cdot \left( \prod_{j=1}^n x_j^{a_j} \right) = \prod_{j=1}^n (-1)^{a_j} x_j^{n-j-a_j} = (-1)^{\sum a_j} \prod_{j=1}^n  x_j^{n-j-a_j} = J \left( \prod_{j=1}^n x_j^{a_j} \right).
\]

Thus, our goal of computing $\det \left( \nabla^{\binom{n}{2} - 2 \ell} \circ J \right)$ on $V_{\ell}$ is the same as computing the determinant of $\sigma_\W(F)^{\binom{n}{2} - 2 \ell} \rho_\W(J)$ as a map from the $2 \ell - \binom{n}{2}$ weight space of $V_{n-1} \otimes V_{n-2} \otimes \cdots \otimes V_0$ to itself.

This is a standard computation. By comparing dimensions of weight spaces, 
\[ V_{n-1} \otimes V_{n-2} \otimes \cdots \otimes V_0 \ \cong \ \bigoplus_{0 \leq k \leq \binom{n}{2} - k} V_{\binom{n}{2}- 2 k}^{\oplus \, |\cS_n(k)| - |\cS_n(k-1)|} . \]
Thus, Theorem~\ref{MainTheorem} comes down to showing that $\sigma_\W(F)^{\binom{n}{2} - 2 \ell} \rho_\W(J)$ as a map from the $2 \ell - \binom{n}{2}$ weight space of $V_{\binom{n}{2}- 2 k}$ to itself is $\pm (\ell-k+1) (\ell-k+2) \cdots ( \textstyle{\binom{n}{2}} - \ell -k)$.
Consulting the formulas from Equation~\eqref{eq:rep}, one sees that
\[ \rho_{\binom{n}{2} - 2k}(J) \ x^{\ell-k} = (-1)^{\ell-k} x^{\binom{n}{2} -2k -(\ell-k)} = (-1)^{\ell-k} x^{\binom{n}{2} - \ell - k }\]
and
\[ \sigma_{\binom{n}{2}-2k}(F)^{\binom{n}{2} - 2 \ell} x^{\binom{n}{2} - \ell - k } = \left(\tfrac{d}{dx} \right)^{\binom{n}{2}-2 \ell}  x^{\binom{n}{2} - \ell - k } = (\ell-k+1) (\ell-k+2) \cdots ( \textstyle{\binom{n}{2}} - \ell -k) x^{\ell-k} .\]
Theorem~\ref{MainTheorem} follows. \qed


\section*{Acknowledgments}
We are grateful for helpful conversations with Ben Elias, Sergey Fomin and Benjamin Young.

OP was partially supported by a Mathematical Sciences Postdoctoral Research Fellowship (\#1703696) from the National Science
Foundation.
DES was partially supported by NSF Grant DMS-1600223. 

\bibliographystyle{amsalpha} 
\bibliography{MacdonaldId}

\end{document}